\newtheorem{thm}{Theorem}[section]
\newtheorem{cor}[thm]{Corollary}
\newtheorem{lem}[thm]{Lemma}
\newtheorem{prop}[thm]{Proposition}
\theoremstyle{definition}
\newtheorem{defn}[thm]{Definition}
\newtheorem{ques}[thm]{Question}
\theoremstyle{remark}
\newtheorem{rem}[thm]{Remark}
\numberwithin{equation}{section}
\newcommand{\Z}{\mathbb Z}
\begin{document}

\title{Deciding whether a mapping torus is of full rank}

\author{Juemin Lin}
\address{Department of Mathematics, Soochow University, Suzhou 215006, CHINA}
\email{20224207039@stu.suda.edu.cn}

\author{Jianchun Wu$^*$}
\thanks{* The second author is the corresponding author.}
\address{Department of Mathematics, Soochow University, Suzhou 215006, CHINA}
\email{wujianchun@suda.edu.cn}


\subjclass[2020]{20F10, 20G30, 20E22}

\keywords{mapping torus, generating orbit sets, rank, generalized linear group}

\begin{abstract}
The mapping torus induced by an automorphism $\phi$ of the free abelian group $\Z^n$ is a semi-direct product $G=\Z^n\rtimes_\phi \Z$. We show that whether the rank of $G$ is equal to $n+1$ is decidable. As a corollary, the rank of $\Z^3\rtimes_\phi \Z$ is decidable.
\end{abstract}

\maketitle

\section{Introduction}

Let $\phi$ be an automorphism of a group $F$, the semi-direct product $G=F\rtimes_\phi\Z$ is usually called a mapping torus group. There are very few known examples for which one knows how to compute the rank (i.e. the minimum cardinality of a generating set) of this kind of groups.

J. Souto \cite{So} showed that when $F$ is the fundamental group $S_g$ of a closed orientable surface of genus $g\geq2$ and $\phi$ is an automorphism of $S_g$ representing a pseudo-Anosov mapping class, then $S_g\rtimes_{\phi^k}\Z$ with $k$ large enough is of full rank (i.e. the rank is $2g+1$). Biringer and Souto \cite{BS} proved that if $\phi$ satisfies some geometric conditions then $S_g\rtimes_{\phi}\Z$ is of full rank. Dowdall and Taylor \cite{DT} generalized Souto's result to a large class of hyperbolic group extensions. When $F$ is the  free abelian group $\Z^n$ of rank $n$, Levitt and Metaftsis showed
\begin{thm}[\cite{LM}, Theorem 1.1]\label{rk2}
There is an algorithm that decides whether $\Z^n\rtimes_\phi \Z$ has rank 2 or not.
\end{thm}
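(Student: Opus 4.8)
The plan is to translate the rank question into a problem about cyclic modules and then to solve the latter by a local-to-global analysis whose only global obstruction is an effectively computable class group. Write $G=\Z^n\rtimes_\phi\Z=\langle \Z^n,t\mid txt^{-1}=\phi(x)\rangle$ and let $\pi\colon G\to\Z$ be the projection with kernel $\Z^n$. Since $\Z^n\neq 1$ the group $G$ is non-cyclic, so $\rk G\geq 2$ and it suffices to decide whether $\rk G\leq 2$. If $G=\langle g_1,g_2\rangle$ then $(\pi(g_1),\pi(g_2))$ generates $\Z$; applying Nielsen transformations to the pair, which act on $(\pi(g_1),\pi(g_2))\in\Z^2$ through $GL_2(\Z)$, I may assume $\pi(g_1)=1$ and $\pi(g_2)=0$, i.e. $g_2=u\in\Z^n$ and $g_1=tv$ with $v\in\Z^n$. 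Conjugation by $g_1$ acts on $\Z^n$ as $\phi$ (the factor $v$ cancels because $\Z^n$ is abelian), and $\langle g_1,g_2\rangle\cap\Z^n=\ker(\pi|_{\langle g_1,g_2\rangle})$ is generated by the conjugates $\{g_1^kg_2g_1^{-k}\}=\{\phi^k(u):k\in\Z\}$. Hence $\langle g_1,g_2\rangle=G$ if and only if $u$ generates $\Z^n$ as a module over $R=\Z[t^{\pm1}]$ with $t$ acting by $\phi$, and so $\rk G\leq 2$ iff $\Z^n$ is a cyclic $R$-module.

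Next I would make this module condition concrete. Because $\phi\in\aut(\Z^n)=GL_n(\Z)$, Cayley--Hamilton shows $\phi^{-1}\in\Z[\phi]$, so $R$-submodules coincide with $\Z[\phi]$-submodules and $\Z[\phi]u=\mathrm{span}_\Z\{u,\phi u,\dots,\phi^{n-1}u\}$. Thus $\Z^n$ is cyclic iff there is $u\in\Z^n$ with $\det[u\mid\phi u\mid\cdots\mid\phi^{n-1}u]=\pm1$. Writing $\chi$ for the characteristic polynomial of $\phi$, and assuming $\phi$ is non-derogatory over $\Q$ so that $A:=\Z[\phi]\cong\Z[t]/(\chi)$, the module $M=\Z^n$ is a faithful $A$-module that is $\Z$-free of rank $\deg\chi=n$, and cyclicity is exactly the assertion that $M\cong A$ as an $A$-module, i.e. that $M$ is free of rank one over the order $A$.

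I would then split $M\cong A$ into a local part and a global part. For each prime $p$, freeness of $M_p=\Z_p^n$ of rank one over $A_p$ is, by Nakayama, equivalent to $M/pM=\F_p^n$ being cyclic over $A/pA=\F_p[\phi]$, i.e. to $\phi$ being non-derogatory modulo $p$; together with non-derogatoriness over $\Q$ this says precisely that $M$ is an \emph{invertible} $A$-module. Non-derogatoriness over a field is detected by the $(n-1)\times(n-1)$ minors of $tI-\phi$ having unit gcd, so over $\Q$ it is an effective test, and clearing denominators in a B\'ezout relation yields an integer $N$ lying in the ideal of minors; then $\phi$ can fail to be non-derogatory modulo $p$ only for the finitely many $p\mid N$, each checked by a gcd computation in $\F_p[t]$. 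Once $M$ is known to be invertible, $M\cong A$ holds if and only if the class $[M]$ is trivial in $\mathrm{Pic}(A)$.

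The main obstacle is this last, global step: to compute the class of $M$ in the Picard group of $A=\Z[t]/(\chi)$ and decide whether it vanishes. When $\chi$ is irreducible and $A$ is the maximal order, this is the classical and effective principal-ideal problem in a number field; but in general $A$ may be a non-maximal order, and when $\chi$ is not squarefree the algebra $A\otimes\Q$ fails to be reduced, so $A$ is a one-dimensional ring rather than a Dedekind domain. The crux of the proof is therefore to establish that $\mathrm{Pic}(A)$ is finite and effectively computable in this generality and that $[M]$ can be located in it; I expect to achieve this by relating $\mathrm{Pic}(A)$ to the class group of the normalization together with conductor-level local data at the finitely many singular primes, all of which is algorithmic. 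Combining the effective non-derogatoriness tests over $\Q$ and modulo the finitely many relevant primes with the decidability of $[M]=0$ then produces the desired algorithm.
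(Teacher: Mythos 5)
First, a point of orientation: Theorem \ref{rk2} is quoted from Levitt--Metaftsis \cite{LM}, and the paper contains no proof of it, so there is no internal argument to compare yours against; the only ingredient the paper actually extracts from \cite{LM} is Lemma \ref{LM}, i.e.\ $\rk(\Z^n\rtimes_\phi\Z)=m_A+1$. Your first paragraph is a correct derivation of exactly that reduction in the relevant case: rank $\le 2$ iff some $u$ has $\{\phi^k u\}$ generating $\Z^n$, i.e.\ $m_A=1$, i.e.\ some $u$ satisfies $\det[u\mid\phi u\mid\cdots\mid\phi^{n-1}u]=\pm1$. Your local analysis is also sound and effective: non-derogatoriness over $\Q$ is testable, and only the primes dividing an explicit integer $N$ in the ideal of $(n-1)\times(n-1)$ minors of $tI-\phi$ need to be examined modulo $p$.

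The genuine gap is the one you flag yourself: the whole difficulty of the theorem is concentrated in the step ``decide whether $[M]=0$ in $\mathrm{Pic}(A)$ for $A=\Z[t]/(\chi)$,'' and you only assert that you ``expect to achieve this.'' For a general monic $\chi$ one must (i) justify $\mathrm{Pic}(A)\cong\mathrm{Pic}(A_{\mathrm{red}})$ by lifting projectives along the nilradical, (ii) compute the Picard group of a possibly non-maximal order in a product of number fields via the conductor exact sequence, which requires effective class-group and unit-group computations in the normalization, and (iii) effectively realize $M$ as a fractional ideal and locate its class. None of this is carried out, and it is precisely the content of the statement being proved; as written, the proposal establishes the reduction and the local tests but not the decision procedure. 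The standard way to close the gap --- and essentially the route taken in \cite{LM} --- is to note that $m_A=1$ iff $A$ is $GL_n(\Z)$-conjugate to the companion matrix of its characteristic polynomial, and then invoke the decidability of the conjugacy problem for integer matrices (Grunewald), which packages exactly the order-theoretic and class-group work you are deferring. If you intend to keep the Picard-group route, the crux step must be proved, not announced.
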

They also proved a similar result of J. Souto that the rank of $\Z^n\rtimes_{\phi^k} \Z$ is at least 3 for large enough $k$, and the authors of \cite{AZ} gave an alternative approach generalizing it. The proof of Theorem \ref{rk2} depends on the following observation:

An element $v\in \Z^n$  can be written as a column vector $v=[v_1,...,v_n]^T$ when a basis of $\Z^n$ is fixed, and the automorphism $\phi$ induces an element $A$ in the general linear group $GL_n(\Z)$ such that $\phi(v)=Av$ for any $v\in \Z^n$. The $A$-orbit of $v$ is $\{A^k v\mid k\in \Z\}$. Denote by $m_A$ the minimum number of $A$-orbits needed to generate $\Z^n$. Levitt and Metaftsis observed that the rank of $\Z^n\rtimes_\phi \Z$ is equal to $m_A+1$ and gave a way to decide whether $m_A=1$. As a corollary, one can compute the rank of $\Z^2\times_{\phi}\Z$ or $F_2\times_{\phi}\Z$ where $F_2$ is the free group of rank 2, but the following  question is left:

\begin{ques}[Levitt-Metaftsis]
Is there an algorithm that, given an automorphism of $\phi$ of $\Z^n$, computes the rank of $G=\Z^n\rtimes_\phi \Z$?
\end{ques}

There are few relevant results about this question apart from \cite{WZ} which gave a complete list of the ranks of those groups for $n=3$ or $4$ with $\phi$ periodic. In the present paper, we prove

\begin{thm}\label{mthm}
Whether the rank of $\Z^n\rtimes_\phi \Z$ is equal to n+1  is decidable.
\end{thm}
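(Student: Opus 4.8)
\emph{Setup and reduction.} By the Levitt--Metaftsis identity $\mathrm{rank}(G)=m_A+1$, deciding whether $\mathrm{rank}(G)=n+1$ is the same as deciding whether $m_A=n$. Since the standard basis vectors already show $m_A\le n$, this amounts to deciding whether $\Z^n$ \emph{cannot} be generated by $n-1$ orbits. The plan is to read $m_A$ as a module-theoretic invariant: regarding $\Z^n$ as a module $M$ over $R=\Z[t,t^{-1}]$ with $t$ acting by $A$, a family of orbits generates $\Z^n$ over $\Z$ exactly when the corresponding vectors generate $M$ over $R$, so $m_A=\mu_R(M)$, the minimal number of $R$-generators. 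Equivalently, $M$ is a lattice over the commutative $\Z$-order $\Lambda=\Z[A]$ inside the finite-dimensional $\Q$-algebra $\mathcal{A}=\Q[A]$. A convenient computational handle is the length-one free resolution $0\to R^n\xrightarrow{tI-A}R^n\to M\to0$: with it, the target condition $m_A\le n-1$ becomes the statement that the presentation matrix $tI-A$ is $GL_n(R)\times GL_n(R)$-equivalent to a block matrix $\mathrm{diag}(1,\Phi')$ with $\Phi'$ of size $n-1$.

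\emph{The local obstruction.} First I would dispose of the elementary, congruence-type obstruction. Let $g$ be the greatest common divisor of all off-diagonal entries of $A$ together with all differences $a_{ii}-a_{jj}$ of its diagonal entries. A prime $p$ makes $A$ scalar modulo $p$ precisely when $p\mid g$; and if $A\equiv cI\pmod p$, then at the maximal ideal $\mathfrak m=(p,t-c)$ one computes $\dim_{R/\mathfrak m}M/\mathfrak m M=n$, so $M$ needs $n$ generators locally and hence $m_A=n$. Thus if $g\neq\pm1$ (which includes the degenerate case $A=\pm I$) the answer is ``$\mathrm{rank}=n+1$''. This step is plainly decidable, since $g$ is computed directly from the entries of $A$.

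\emph{The global part.} The hard case is $g=\pm1$ together with $A\neq\pm I$: then every localization of $M$ is generated by at most $n-1$ elements, and over $\Q$ one also has $\mu_{\mathcal{A}}(M\otimes\Q)\le n-1$ (no eigenvalue has geometric multiplicity $n$), yet $m_A$ may still equal $n$ for a purely global reason. The model is $n=2$ with $\chi_A$ irreducible, where $M$ is a rank-one lattice over a quadratic order and $m_A=2$ happens exactly when $M$ is non-principal, i.e.\ when the binary quadratic form $\det[v\,|\,Av]$ fails to represent $\pm1$; this is a genuine global, non-congruence condition governed by the finite class group of $\Lambda$. Accordingly, the plan is to invoke finiteness theorems for lattices over $\Z$-orders --- the Jordan--Zassenhaus theorem and the finiteness and computability of the Picard/locally-free class group of $\Lambda$ --- to turn the existence of an $(n-1)$-element generating set into a \emph{finite} search: reduction theory at the archimedean place bounds the size of candidate tuples $(v_1,\dots,v_{n-1})$, the finitely many bad primes impose explicit congruences, and together they confine the search to a computable finite set.

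\emph{The main obstacle.} I expect the crux to be this global step when $\chi_A$ is inseparable, so that $\mathcal{A}=\Q[A]$ is non-reduced and $M\otimes\Q$ has $\mathcal{A}$-rank larger than one. There the clean fractional-ideal/class-group dictionary no longer applies directly, and one must instead build a filtration of $M$ whose successive quotients are lattices over the separable semisimple quotients of $\mathcal{A}$ and over the unipotent blocks, decide generation on each piece, and control how the pieces reassemble into a global generating set. Keeping every bound in this process effective --- so that the interplay between the archimedean reduction and the finite set of exceptional primes still yields a terminating search --- is the heart of the argument.
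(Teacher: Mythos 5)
Your reduction to deciding $m_A=n$ and your ``local obstruction'' step are correct and agree with the paper: your quantity $g$ (the gcd of the off-diagonal entries of $A$ and the differences $a_{ii}-a_{jj}$) coincides with the paper's $\gcd(A-a_{11}I)$, and the implication $g\neq \pm1\Rightarrow m_A=n$ is exactly the combination of Lemmas \ref{cor:l} and \ref{lem:n}. The genuine gap is everything after that. The actual content of the paper (Theorem \ref{mthm2}) is that for $n\geq 3$ the converse also holds: $\gcd(A-a_{11}I)=1$ already forces $m_A\leq n-1$, so there is \emph{no} residual global obstruction, no class group to consult, and no finite search to organize. Your expectation that $m_A=n$ can persist for ``purely global reasons'' when $g=\pm1$ is extrapolated from $n=2$, where the question ``is $m_A=1$?'' is indeed governed by the class group of a quadratic order; but for $n\geq 3$ one is asking whether $n-1\geq 2$ orbits suffice, and two orbits already give enough room for an explicit construction. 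Concretely, the paper conjugates $A-a_{11}I$ into a normal form (types $\mathcal{H}$, $\mathcal{H}_0$, $\mathcal{H}_n$ of Propositions \ref{prop:h} and \ref{lem:hn}) and then exhibits a vector $v=[s,0,\dots,0,t]^T$ for which the $2\times 2$ minors of $[v\,|\,Av]$ have gcd $1$, so $\{v,Av\}$ extends to a basis of $\Z^n$ and $n-1$ orbits generate (Lemma \ref{lem:ma}). Your framework cannot see this because you never attempt the sufficiency direction.

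Even taken on its own terms, your global program is a plan rather than a proof: the Jordan--Zassenhaus/Picard-group strategy is only invoked, the archimedean and $p$-adic bounds needed to make the candidate set finite are not established, and you yourself flag the inseparable case ($\Q[A]$ non-reduced) as unresolved. Since that case cannot be excluded, the proposal does not establish decidability. The remedy is not to push the order-theoretic machinery harder but to prove the missing implication $\gcd(A-a_{11}I)=1\Rightarrow m_A<n$ directly, which is elementary (Smith normal forms and gcd manipulations) once the normal form is in place.
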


It is  trivial for $n=1$ while \cite{LM} has provided an algorithm for $n=2$. One can see Theorem \ref{mthm2} which gives a simple way to decide it for $n\geq 3$. Since the rank of $\Z^n\rtimes_\phi \Z$ is not less than 2, by Theorem \ref{rk2} and \ref{mthm} we have the following
\begin{cor}
The rank of $\Z^3\rtimes_\phi \Z$ is decidable.
\end{cor}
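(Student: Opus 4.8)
The plan is to reduce the determination of the rank $r$ of $G=\Z^3\rtimes_\phi\Z$ to a short finite case analysis driven by the two decidability results already in hand. Recall the Levitt--Metaftsis observation that $r=m_A+1$, where $m_A$ is the minimum number of $A$-orbits needed to generate $\Z^3$ and $A\in GL_3(\Z)$ is the matrix induced by $\phi$. Since any fixed basis of $\Z^3$ consists of three vectors whose orbits already contain that basis and hence generate, we have $m_A\le 3$, so $r\le 4$; combined with the stated fact that $r\ge 2$, this confines $r$ to the three values $2,3,4$, equivalently $m_A\in\{1,2,3\}$.

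The core of the argument is then elimination. First run the algorithm of Theorem \ref{rk2} on $A$: it decides whether $r=2$, equivalently whether $m_A=1$. If the answer is affirmative, we are done. Otherwise, run the algorithm of Theorem \ref{mthm} in the case $n=3$: it decides whether $r=n+1=4$, equivalently whether $m_A=3$. If that answer is affirmative, we again obtain the rank. If both tests return ``no'', then the only remaining possibility in $\{2,3,4\}$ is $r=3$, and we output $3$ without any further computation.

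There is essentially no analytic obstacle at this stage; all of the genuine work is carried by Theorems \ref{rk2} and \ref{mthm}, and the present step is purely a pigeonhole over the three admissible ranks. The only things one must verify are that the two cited algorithms terminate and return correct Boolean answers for an arbitrary given $A\in GL_3(\Z)$, and that the bounds $2\le r\le 4$ hold for every such $A$ — both of which are immediate from the preceding material. Thus the decision procedure is simply the finite composition: test whether $r=2$; if not, test whether $r=4$; and if neither holds, conclude $r=3$.
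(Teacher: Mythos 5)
Your proposal is correct and follows exactly the paper's (implicit) argument: the rank lies in $\{2,3,4\}$ since $m_A\le 3$ and the rank is at least $2$, Theorem \ref{rk2} decides whether it equals $2$, Theorem \ref{mthm} decides whether it equals $4$, and otherwise it is $3$.
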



\section{Preliminaries}

\begin{defn}
Let $A$ be an $n\times n$ integer matrix and $S\subseteq\Z^n$. The orbit subgroup $OG_A(S)$ on $S$ by $A$ is the subgroup of $\Z^n$ generated by $\{A^k v\ |\ k\geq 0, v\in S\}$. If $OG_A(S)$ is the full group $\Z^n$, we call $S$ a generating orbit set of $A$. Among all generating orbit sets of $A$ those having minimal cardinalities are called minimal generating orbit sets. We denote the cardinality of a minimal generating orbit set by $m_A$ , that is $m_A=\min\{\# S\mid OG_A(S)=\Z^n\}$.
\end{defn}

\begin{lem}\label{cor:l}
$m_A=m_{A-\lambda I}$ for any $\lambda \in \Z$.
\end{lem}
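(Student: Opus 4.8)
The plan is to establish the stronger identity $OG_A(S)=OG_{A-\lambda I}(S)$ for every subset $S\subseteq\Z^n$; the equality $m_A=m_{A-\lambda I}$ then follows at once, because a set $S$ is a generating orbit set for $A$ precisely when it is one for $A-\lambda I$.

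First I would give an intrinsic description of the orbit subgroup: $OG_A(S)$ is the smallest subgroup $H\leq\Z^n$ containing $S$ with $AH\subseteq H$. Indeed $OG_A(S)$ contains $S$ (take $k=0$) and is forward-invariant, since $A(A^k v)=A^{k+1}v$ again lies in it; conversely any forward-invariant subgroup containing $S$ must contain every generator $A^k v$ with $k\geq 0$ and $v\in S$, hence all of $OG_A(S)$. It is worth noting that only the one-sided condition $AH\subseteq H$ appears, matching the restriction to nonnegative powers in the definition, so no invertibility of $A$ is ever invoked.

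The key step is that, for subgroups of $\Z^n$, forward invariance under $A$ and under $B:=A-\lambda I$ are the same condition. If $AH\subseteq H$, then for $v\in H$ we have $\lambda v\in H$ (here $\lambda\in\Z$ and $H$ is closed under integer scaling), whence $Bv=Av-\lambda v\in H$; the converse is symmetric via $A=B+\lambda I$. Thus the $A$-invariant and the $B$-invariant subgroups of $\Z^n$ coincide, and so do their smallest members containing $S$, giving $OG_A(S)=OG_B(S)$. In particular $OG_A(S)=\Z^n$ if and only if $OG_B(S)=\Z^n$, so $A$ and $A-\lambda I$ admit identical generating orbit sets, hence identical minimal cardinalities.

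The only point requiring care is the hypothesis $\lambda\in\Z$: it is exactly what keeps $\lambda v$ inside the subgroup $H$ and makes the two invariance conditions interchangeable (over $\Q$ or $\Real$ this would be automatic, but here integrality is what lets us stay within the lattice). Beyond isolating this hypothesis I anticipate no real obstacle, since the argument is purely a matter of comparing invariant-subgroup lattices and never needs negative powers of $A$.
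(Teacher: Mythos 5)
Your proof is correct and amounts to essentially the same argument as the paper's: both establish that $A$ and $A-\lambda I$ determine the same orbit subgroups, the paper by expanding $(A-\lambda I)^k v$ as an integral linear combination of $v, Av,\cdots, A^k v$ and then invoking symmetry, you by observing that a subgroup of $\Z^n$ is forward-invariant under $A$ if and only if it is forward-invariant under $A-\lambda I$. Your packaging of $OG_A(S)$ as the smallest forward-invariant subgroup containing $S$ is a clean and valid reformulation, and you correctly isolate integrality of $\lambda$ as the only hypothesis used; there is no gap.
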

\begin{proof}
Let $S$ be a minimal generating orbit set of $A-\lambda I$, since $(A-\lambda I)^k v$ is an integral linear combination of $\{v, Av,\cdots, A^kv\}$ for $k\geq 0$ and $v\in S$, we have $\Z^n=OG_{A-\lambda I}(S)\subseteq OG_A(S)$ which means $S$ is a generating orbit set of $A$, hence $m_{A-\lambda I}\geq m_A$. Similarly, $m_{A}= m_{(A-\lambda I)+\lambda I}\geq m_{A-\lambda I}$.
\end{proof}

For a matrix $A\in GL_n(\Z)$, by Cayley-Hamilton Theorem, $A^k v$ is an integral linear combination of $v, Av,\cdots, A^{n-1}v$ for any $v\in \Z^n$ and $k\in \Z$, hence the $A$-orbit $\{A^kv\mid k\in \Z\}$ of $v$ is a subset of $OG_A(v)$ and $m_A$ is the minimum number of $A$-orbits needed to generate $\Z^n$. The following lemma is proved by Levitt and Metaftsis.
\begin{lem}[\cite{LM}, Corollary 2.4]\label{LM}
Let $\phi$ be an automorphism of $\Z^n$, then the rank of $\Z^n\rtimes_{\phi}\Z$ is equal to $m_A+1$ where $A\in GL_n(Z)$ is the matrix induced by $\phi$ when a basis of $\Z^n$ is fixed.
\end{lem}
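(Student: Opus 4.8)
The plan is to establish $\rk(G) = m_A + 1$ for $G = \Z^n \rtimes_\phi \Z$ by proving the two inequalities separately, working throughout with the short exact sequence $1 \to \Z^n \to G \xrightarrow{\pi} \Z \to 1$, where $\pi$ kills $\Z^n$ and sends the stable letter $t$ (the generator of the acting $\Z$) to $1 \in \Z$; recall the multiplication rule $t v t^{-1} = Av$. For the easy inequality $\rk(G) \le m_A + 1$, I would take a minimal generating orbit set $S = \{v_1, \dots, v_{m_A}\}$ of $A$ and check that $\{v_1, \dots, v_{m_A}, t\}$ generates $G$: conjugating by powers of $t$ yields $t^k v_i t^{-k} = A^k v_i$ for every $k \in \Z$, and since (by Cayley--Hamilton, as noted after Lemma \ref{cor:l}) the two-sided orbit generates the same subgroup as $\{A^k v_i : k \ge 0\}$, these elements generate $OG_A(S) = \Z^n$; together with $t$ they generate all of $G$.

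The substance is the reverse inequality $\rk(G) \ge m_A + 1$. Suppose $G = \langle g_1, \dots, g_d\rangle$; I must produce a generating orbit set of $A$ of cardinality $d - 1$. Applying $\pi$, the integers $a_i = \pi(g_i)$ generate $\Z$, so $\gcd(a_1, \dots, a_d) = 1$. The key normalization is to apply Nielsen transformations to the tuple $(g_1, \dots, g_d)$ --- replacing $g_i$ by $g_i g_j^{\pm 1}$, by $g_i^{-1}$, or permuting entries --- which preserve the generated subgroup and which $\pi$ carries to the corresponding elementary operations on $(a_1, \dots, a_d)$. Because these operations realize the Euclidean algorithm on an integer tuple of gcd $1$, I can reduce to a new generating tuple $(h_1, \dots, h_d)$ with $\pi(h_1) = 1$ and $\pi(h_i) = 0$ for $i \ge 2$. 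Thus $h_1 = u_1 t$ for some $u_1 \in \Z^n$, while $h_2 = u_2, \dots, h_d = u_d$ all lie in the kernel $\Z^n$.

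It then remains to identify $\Z^n = \ker(\pi)$ with the orbit subgroup of $\{u_2, \dots, u_d\}$. Here I would invoke the standard Reidemeister--Schreier description of the kernel of a surjection onto $\Z$: with transversal $\{h_1^k : k \in \Z\}$, the kernel is generated by the conjugates $h_1^k u_i h_1^{-k}$ for $k \in \Z$ and $2 \le i \le d$ (the Schreier generators arising from $h_1$ itself are trivial). Since $\Z^n$ is abelian, $h_1^k u_i h_1^{-k} = A^k u_i$, the auxiliary vector $u_1$ cancelling, so $\ker(\pi) = OG_A(\{u_2, \dots, u_d\})$, again using Cayley--Hamilton to pass between one-sided and two-sided orbits. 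As $G = \langle h_1, \dots, h_d\rangle$ forces this kernel to equal all of $\Z^n$, the set $\{u_2, \dots, u_d\}$ is a generating orbit set of cardinality $d - 1$, whence $m_A \le d - 1$ and $\rk(G) \ge m_A + 1$. I expect the main obstacle to be making the Reidemeister--Schreier step fully rigorous --- precisely justifying that the kernel is generated by exactly those conjugates and that $u_1$ contributes nothing --- together with the bookkeeping ensuring that the Nielsen reduction lifts the integer Euclidean algorithm without disturbing the generating property.
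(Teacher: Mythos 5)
Your proof is correct, but note that the paper itself does not prove this lemma at all: it is imported verbatim from Levitt--Metaftsis (\cite{LM}, Corollary 2.4), so there is no in-paper argument to compare against. Your two-step argument is the standard one (and essentially the one in \cite{LM}): the upper bound $\rk(G)\le m_A+1$ by adjoining the stable letter $t$ to a minimal generating orbit set, and the lower bound by projecting a generating $d$-tuple to $\Z$, Nielsen-reducing so that one generator maps to $1$ and the rest land in $\ker\pi=\Z^n$, then identifying $\ker\pi$ with the orbit subgroup of the remaining $d-1$ elements. The two points you flag as potential obstacles are in fact unproblematic: the Nielsen reduction is just the Euclidean algorithm on a gcd-$1$ integer tuple lifted to tuple moves that preserve generation, and the Reidemeister--Schreier step can be replaced by an even softer observation --- the normal closure $N$ of $\{u_2,\dots,u_d\}$ satisfies $G/N$ cyclic mapping onto $\Z$, hence $N=\ker\pi$, and since $\ker\pi$ is abelian every conjugate $gu_ig^{-1}$ with $g=wh_1^k$, $w\in\Z^n$, equals $h_1^ku_ih_1^{-k}=A^ku_i$ (the vector $u_1$ in $h_1=u_1t$ cancels because $\Z^n$ is abelian). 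The passage between two-sided orbits $\{A^k u_i\mid k\in\Z\}$ and the one-sided orbit subgroup $OG_A$ is exactly the Cayley--Hamilton remark the paper records immediately after the lemma, so your proof is complete as written.
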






Suppose $A, B$ are two $n\times n$ integer matrices, if there exists $X\in GL_n(\Z)$ such that $B=XAX^{-1}$, then we say $A$ is integrally conjugate to $B$. Throughout this paper, conjugation always means integral conjugation. The following lemma shows that $m_A$ is a conjugation invariant.

\begin{lem}\label{lem:co}
Suppose  $A, B$ are two $n\times n$ integer matrices such that $A$ is integrally conjugate to $B$, then $m_A=m_B$.
\end{lem}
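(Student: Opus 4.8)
The plan is to exploit the conjugating matrix $X \in GL_n(\Z)$ directly, using it to transport generating orbit sets of $A$ to generating orbit sets of $B$ and back. The starting point is the identity $B^k = (XAX^{-1})^k = XA^kX^{-1}$, valid for all $k \geq 0$. This says that $X$ intertwines the two orbit structures, so I expect it to carry the entire combinatorial data of $A$-orbits onto that of $B$-orbits without loss.

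Concretely, I would first show that if $S \subseteq \Z^n$ is any subset, then the image $XS = \{Xv \mid v \in S\}$ satisfies $OG_B(XS) = X\bigl(OG_A(S)\bigr)$. The computation is the chain
\begin{equation*}
\{B^k(Xv) \mid k \geq 0,\ v \in S\} = \{XA^k X^{-1}Xv \mid k \geq 0,\ v \in S\} = X\{A^k v \mid k \geq 0,\ v \in S\},
\end{equation*}
and then I would invoke the fact that $X$, being an element of $GL_n(\Z)$, is a group automorphism of $\Z^n$; hence the subgroup generated by a set of the form $X \cdot T$ is exactly $X$ applied to the subgroup generated by $T$. This is the one place where a small verification is genuinely needed, though it is routine: a group homomorphism sends the subgroup generated by a set to the subgroup generated by the image.

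With that identity in hand, the conclusion is immediate. If $S$ is a minimal generating orbit set for $A$, so that $OG_A(S) = \Z^n$ and $\# S = m_A$, then $OG_B(XS) = X(\Z^n) = \Z^n$ since $X$ is surjective onto $\Z^n$, while $\#(XS) = \# S = m_A$ because $X$ is injective. Thus $XS$ is a generating orbit set for $B$, giving $m_B \leq m_A$. Applying the same argument with $A$ and $B$ interchanged and with $X^{-1} \in GL_n(\Z)$ as the conjugator yields $m_A \leq m_B$, and therefore $m_A = m_B$.

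I do not anticipate a serious obstacle here: the whole difficulty, such as it is, lies in cleanly recording that $X$ acts simultaneously as a bijection of the underlying set $\Z^n$ and as a group automorphism, so that both the cardinality condition $\# S$ and the spanning condition $OG(\,\cdot\,) = \Z^n$ are preserved. The symmetry of the argument under $X \leftrightarrow X^{-1}$ is what upgrades the inequality to an equality, so I would make sure the transport statement is phrased for an arbitrary $S$ rather than only for minimal sets.
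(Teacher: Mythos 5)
Your proposal is correct and follows exactly the same route as the paper: transport a minimal generating orbit set $S$ of $A$ to $XS$ for $B$ via the conjugator $X \in GL_n(\Z)$, then apply the symmetric argument with $X^{-1}$ to get the reverse inequality. You merely spell out the step the paper labels ``obvious'' (that $OG_B(XS) = X\bigl(OG_A(S)\bigr) = \Z^n$ and $\#(XS)=\#S$), which is a fine amount of detail but not a different argument.
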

\begin{proof}
If $S$ is a minimal generating orbit set of $A$, since $B=XAX^{-1}$ for some $X\in GL_n(\mathbb{Z})$, it is obvious that $\{Xv\mid v\in S\}$ is a generating orbit set of $B$, so $m_A\ge m_{B}$. Similarly, $m_{B}\ge m_{X^{-1}BX}=m_A$.
\end{proof}

Let $C$ be a finite set of integers, the greatest common divisor of absolute values of all elements in $C$ is denoted by $\gcd(C)$. We assume any prime number divides 0 and $\gcd(0)=0$.
\begin{lem}\label{lem:n}
Let $A$ be an $n\times n$ integer matrix such that $\gcd(A)\neq 1$, then $m_A=n$.
\end{lem}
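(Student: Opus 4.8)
The plan is to prove the two inequalities $m_A\le n$ and $m_A\ge n$ separately. The upper bound is immediate and in fact holds for every $n\times n$ integer matrix: taking $S$ to be the standard basis $\{e_1,\dots,e_n\}$ of $\Z^n$, the $k=0$ terms of the orbits already give $OG_A(S)\supseteq\langle e_1,\dots,e_n\rangle=\Z^n$, so $m_A\le n$. Thus the entire content of the lemma lies in the lower bound, which is where the hypothesis $\gcd(A)\neq 1$ must be used.

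For the lower bound, first I would choose a prime $p$ dividing every entry of $A$. Since $\gcd(A)\neq 1$, such a prime exists: if $\gcd(A)=0$ then $A=0$ and any prime works by the stated convention, and otherwise any prime divisor of $\gcd(A)$ does the job. With this choice $A\equiv 0\pmod p$, so that $A^k v\equiv 0\pmod p$ for every $k\ge 1$ and every $v\in\Z^n$.

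The key step is to pass to the coordinatewise reduction $\pi\colon\Z^n\to\F_p^n$. For any $S\subseteq\Z^n$, the subgroup $OG_A(S)$ is generated by $\{A^k v\mid k\ge 0,\ v\in S\}$, and under $\pi$ every generator with $k\ge 1$ maps to $0$; hence $\pi(OG_A(S))$ is the $\F_p$-span of $\{\pi(v)\mid v\in S\}$, a subspace of dimension at most $\#S$. If $S$ is a generating orbit set then $\pi(OG_A(S))=\F_p^n$, which forces $\#S\ge\dim_{\F_p}\F_p^n=n$, and therefore $m_A\ge n$. Combining the two inequalities yields $m_A=n$.

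I expect no serious obstacle here; the one point requiring care is the bookkeeping of the orbits, namely observing that only the $k=0$ term $v$ of each orbit survives the reduction modulo $p$, so that the task of generating the whole space $\F_p^n$ rests on the at most $\#S$ vectors $\pi(v)$. The convention that every prime divides $0$ is precisely what allows the single argument to absorb the degenerate case $A=0$ as well.
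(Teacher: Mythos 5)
Your proof is correct and follows essentially the same approach as the paper: reduce modulo a common divisor of the entries of $A$, observe that only the $k=0$ terms of the orbits survive the reduction, and count generators of the quotient. The only cosmetic difference is that you reduce modulo a prime $p$ dividing $\gcd(A)$ (which lets the convention about $0$ absorb the case $A=0$ and makes the quotient a vector space), whereas the paper reduces modulo $d=\gcd(A)$ itself and handles $A=0$ separately.
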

\begin{proof}
Let $d=\gcd(A)$. If $d=0$, then $A=0$, hence $m_A=n$. If $d\neq 0$, let $S=\{v_1,\cdots,v_m\}$ be a minimal generating orbit set of $A$ (note that $m\leq n$), then $\mathbb{Z}^n$ is spanned by $\{A^k v_j\mid k\geq 0, j=1,\cdots, m\}$. Denote by $\phi: \Z^n\to \Z_d^n$ the canonical module $d$ homomorphism, then $\Z_d^n$ is spanned by $\{\phi(v_j)\mid j=1,\cdots, m\}$ since $\phi(A^k v_j)=0$ when $k>0$. Thus $m\geq n$ and we have $m_A=n$.
\end{proof}

The following lemma will be frequently used in this paper.




\begin{lem}\label{lem:g}
Let $X$ be an $n\times m$ integer matrix, then $\gcd(X)=\gcd(AXB)$ for any $A\in GL_n(\mathbb{Z})$ and $B\in GL_m(\mathbb{Z})$.
\end{lem}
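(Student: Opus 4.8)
The plan is to establish the single divisibility inclusion that holds for \emph{arbitrary} integer matrices, and then to invoke invertibility to run the same argument in the reverse direction. Concretely, I would first prove the following one-sided statement: if $A$ is any $n\times n$ integer matrix and $B$ any $m\times m$ integer matrix (not necessarily invertible), then $\gcd(X)\mid\gcd(AXB)$.

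To see this, observe that the $(i,j)$ entry of $AXB$ equals $\sum_{k,l}A_{ik}X_{kl}B_{lj}$, which is an integer linear combination of the entries of $X$. Hence $\gcd(X)$, dividing every entry of $X$, divides every entry of $AXB$, and therefore divides $\gcd(AXB)$. When $X=0$ both sides are $0$ and the convention $\gcd(0)=0$ makes the claim hold trivially.

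Now suppose $A\in GL_n(\mathbb{Z})$ and $B\in GL_m(\mathbb{Z})$, and set $Y=AXB$. Applying the one-sided statement gives $\gcd(X)\mid\gcd(Y)$. Since $A^{-1}$ and $B^{-1}$ are again integer matrices, we have $X=A^{-1}YB^{-1}$, and the same one-sided statement applied to $Y$ yields $\gcd(Y)\mid\gcd(X)$. As both quantities are nonnegative integers that divide each other, they must coincide, proving $\gcd(X)=\gcd(AXB)$.

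The argument is essentially a bookkeeping fact about linear combinations, so there is no serious obstacle; the only points needing care are the degenerate zero-matrix case under the convention $\gcd(0)=0$, and the remark that mutual divisibility of nonnegative integers forces equality. One could alternatively phrase the result through the Smith normal form, noting that $\gcd(X)$ is the first determinantal divisor $d_1(X)$, which is manifestly invariant under left and right multiplication by unimodular matrices; but the direct divisibility argument above is shorter and self-contained.
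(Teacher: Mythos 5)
Your proof is correct and follows essentially the same route as the paper's: one-sided divisibility $\gcd(X)\mid\gcd(AXB)$ via integer linear combinations of entries, then the reverse divisibility by multiplying by $A^{-1}$ and $B^{-1}$. The extra care you take with the zero-matrix convention and with mutual divisibility forcing equality is fine but not a substantive difference.
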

\begin{proof}
It is trivial when $\gcd(X)=0$. Since all entries in $AXB$ are integral linear combinations of entries in $X$, then $\gcd(X)$ divides $\gcd(AXB)$. Similarly, $\gcd(AXB)$ divides $\gcd(A^{-1}(AXB)B^{-1})=\gcd(X)$. So we have $\gcd(X)=\gcd(AXB)$.
\end{proof}

\section{Proof of Theorem \ref{mthm}}\label{fact}

In this section, we will prove $m_A=n$ if and only if $\gcd(A-a_{11}I)\neq 1$ for $A=(a_{ij})\in GL_n(\Z)$ with $n\geq 3$. By Lemma \ref{LM}, it provides a way to  decide whether the rank of a mapping torus $\Z^n\times_{\phi} Z$ is $n+1$.
%

%

\begin{defn}
We call an $n\times n$ integer matrix $H=(h_{ij})$ with $n\geq 3$ a type $\mathcal{H}$ matrix if $h_{i1}=0$ for $i=3,...,n$. That is to say the shape of  $H$ is
    $$ \begin{bmatrix}
        h_{11}& h_{12}& \cdots   & h_{1n} \\
        h_{21}& h_{22}& \cdots  & h_{2n} \\
        0 & h_{32} & \cdots & h_{3n} \\
        \vdots & \vdots & \vdots & \vdots \\
        0 & h_{n2} & \cdots & h_{nn}
    \end{bmatrix}.$$
Moreover, if $h_{11}=0$, then we say $H$ is a type $\mathcal{H}_0$ matrix. A matrix $H=(h_{ij})$ is called  type $\mathcal{H}_n$ if $H$ is of type $\mathcal{H}_0$ and $\gcd(h_{21}, h_{1n},\cdots,h_{nn})=1$.
\end{defn}

\begin{prop}\label{prop:h}
Let $A=(a_{ij})$ be an $n\times n$ integer matrix with $n\geq 3$, then $A$ is integrally conjugate to a matrix $B=(b_{ij})$ of type $\mathcal{H}$ with $b_{11}=a_{11}$.
\end{prop}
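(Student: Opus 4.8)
The plan is to realize the required conjugation by a block-diagonal integer matrix that fixes the first standard basis vector $e_1$, so that its effect on the first column of $A$ collapses to a single linear transformation of the lower part of that column while leaving the $(1,1)$ entry untouched. Concretely, write the first column of $A$ as $(a_{11},a_{21},\dots,a_{n1})^T$ and isolate the sub-vector $u=(a_{21},\dots,a_{n1})^T\in\Z^{n-1}$ consisting of the entries strictly below $a_{11}$. The whole task reduces to killing all but the top coordinate of $u$ by an integral change of basis on the last $n-1$ coordinates.

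The key ingredient is the standard fact that for any $u\in\Z^{n-1}$ there exists $P\in GL_{n-1}(\Z)$ with $Pu=(d,0,\dots,0)^T$, where $d=\pm\gcd(u)$; such a $P$ is produced by running the Euclidean algorithm on the entries of $u$ through integer elementary row operations, each of which lies in $GL_{n-1}(\Z)$ (equivalently, this is the single-column case of Hermite/Smith normal form). Given such a $P$, I would set
\[
X=\begin{pmatrix} 1 & 0 \\ 0 & P \end{pmatrix}\in GL_n(\Z),
\]
and put $B=XAX^{-1}$. Since $Xe_1=e_1$ we also have $X^{-1}e_1=e_1$, so the first column of $B$ is $Be_1=XAX^{-1}e_1=XAe_1$. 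As $X$ acts as the identity on the first coordinate and as $P$ on the remaining ones, this first column equals $(a_{11},(Pu)^T)^T=(a_{11},d,0,\dots,0)^T$.

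Reading off entries gives $b_{11}=a_{11}$ and $b_{i1}=0$ for $i\ge 3$, which is exactly the assertion that $B=(b_{ij})$ is of type $\mathcal H$ with $b_{11}=a_{11}$ (the case $u=0$, forcing $d=0$ and allowing $P=I$, is harmless). I expect no serious obstacle: the argument is entirely constructive, and the only point requiring a design choice is using a block-diagonal $X$ fixing $e_1$, which is precisely what simultaneously guarantees that conjugation reduces to left multiplication of $u$ by $P$ and that the pivot entry $a_{11}$ is preserved. The reduction of $u$ itself is routine integer linear algebra.
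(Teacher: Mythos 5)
Your proof is correct and follows essentially the same strategy as the paper: both conjugate by a block matrix of the form $\begin{pmatrix}1 & 0\\ 0 & P\end{pmatrix}$ fixing $e_1$, so that the lower part $(a_{21},\dots,a_{n1})^T$ of the first column gets multiplied by $P\in GL_{n-1}(\Z)$ and reduced to $(d,0,\dots,0)^T$. The only difference is one of packaging: the paper builds $P$ explicitly by iterated B\'ezout-type elementary conjugations, whereas you invoke the standard column-reduction (Hermite/Smith) fact in a single step, which is a perfectly valid shortcut.
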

\begin{proof}
If $A$ is not of type $\mathcal{H}$, then there exists some $3\leq k\leq n$ such that $a_{k1}\neq 0$. Without loss of generality, we can assume $a_{21}\neq 0$, otherwise let

\[X=\begin{pNiceMatrix}[last-row,last-col,nullify-dots,xdots/line-style={dashed,blue}]
1& \Vdots & & & & \Vdots  \\
 \Cdots&    0 & \Cdots  & & &   1 & \Cdots& &   &   \leftarrow 2 \\
&  & 1 \\
&  \Vdots & & \Ddots[line-style=standard] & &   \\
& \Vdots & & & 1&\Vdots \\
 \Cdots &    -1  & \Cdots  & &   &   0 &\Cdots & &   &   \leftarrow k \\
 &\Vdots & & & &\Vdots & 1 \\
 & & & & & & & \Ddots[line-style=standard]  \\
 & & & & & & & & 1 \\
 &   \overset{\uparrow}{2} & & & &   \overset{\uparrow}{k} \\
\end{pNiceMatrix},\]
then $X\in GL_n(\Z)$, the $(2,1)$ entry of $XAX^{-1}$ is not 0 and the $(1,1)$ entry is $a_{11}$.

Now there exist two integers $s, t$ such that $sa_{21}+ta_{k1}=d$ where $d=\gcd(a_{21},a_{k1})$.
Let


\[Y=\begin{pNiceMatrix}[last-row,last-col,nullify-dots,xdots/line-style={dashed,blue}]
1& \Vdots & & & & \Vdots  \\
 \Cdots&    s & \Cdots  & & &   t & \Cdots& &   &   \leftarrow 2 \\
&  & 1 \\
&  \Vdots & & \Ddots[line-style=standard] & &   \\
& \Vdots & & & 1&\Vdots \\
 \Cdots &    \displaystyle-\frac{a_{k1}}{d}  & \Cdots  & &   &   \displaystyle\frac{a_{21}}{d} &\Cdots & &   &   \leftarrow k \\
 &\Vdots & & & &\Vdots & 1 \\
 & & & & & & & \Ddots[line-style=standard]  \\
 & & & & & & & & 1 \\
 &   \overset{\uparrow}{2} & & & &   \overset{\uparrow}{k} \\
\end{pNiceMatrix},\]
one can verify $Y$ is in $GL_n(\Z)$, the $(k,1)$ entry of $\widetilde{A}=YAY^{-1}$ is 0, the $(2,1)$ entry is $d$ and  the $(i,1)$ entry is $a_{i1}$ for $i\neq 2$ and $i\neq k$.


The above procedure applies to $\widetilde{A}$  and so on. Finally we get a matrix $B=(b_{ij})$ of type $\mathcal{H}$ which is conjugate to $A$ with $b_{11}=a_{11}$.
\end{proof}
\begin{rem}
A similar method can be used to prove  each $n\times n$ integer matrix is integrally conjugate to an upper Hessenberg matrix (see \cite{Ka} Section 21.2 for another proof), but we won't use this fact.
\end{rem}

\begin{lem}\label{lem:v}
    Let $v_1$ be a non-zero element in $\mathbb{Z}^n$ with $n\geq 1$, then for any $v_2,v_3\in\Z^n$, there exists $k\in \mathbb{Z}$ such that $\gcd(v_1,v_2,v_3)=\gcd(v_1,v_3+kv_2)$.
\end{lem}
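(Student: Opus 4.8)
The plan is to reduce to a coprime situation and then argue one prime at a time, patching the local choices together with the Chinese Remainder Theorem.

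First I would record the easy half. Every entry of $v_3 + kv_2$ is an integer combination of entries of $v_2$ and $v_3$, so $d:=\gcd(v_1,v_2,v_3)$ divides every entry of $v_1$ and of $v_3+kv_2$; hence $d \mid \gcd(v_1, v_3 + k v_2)$ for every $k \in \Z$. Because $v_1 \neq 0$ we have $d \geq 1$, and since $d$ divides all entries of $v_1, v_2, v_3$ I may write $v_i = d\,w_i$ with $w_i \in \Z^n$ and $\gcd(w_1,w_2,w_3)=1$; note $w_1 \neq 0$. As $\gcd(v_1, v_3+kv_2) = d\,\gcd(w_1, w_3 + k w_2)$, it suffices to find $k$ with $\gcd(w_1, w_3 + k w_2) = 1$. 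So I may assume from the start that $\gcd(v_1,v_2,v_3)=1$ and aim for $\gcd(v_1, v_3 + k v_2) = 1$.

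The key reduction is that $\gcd(v_1, v_3 + k v_2)$ always divides $a := \gcd(v_1)$, and $a \geq 1$ is a fixed integer with only finitely many prime divisors; thus it is enough to arrange, for each prime $p \mid a$, that $v_3 + k v_2 \not\equiv 0 \pmod p$. Fix such a prime $p$. Since $p$ divides every entry of $v_1$, the hypothesis $\gcd(v_1,v_2,v_3)=1$ forces $v_2 \not\equiv 0$ or $v_3 \not\equiv 0 \pmod p$. If $v_2 \equiv 0 \pmod p$, then $v_3 \not\equiv 0 \pmod p$ and $v_3 + k v_2 \equiv v_3 \not\equiv 0 \pmod p$ for every $k$, so $p$ imposes no condition. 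If instead $v_2 \not\equiv 0 \pmod p$, I would choose a coordinate $i$ with $(v_2)_i \not\equiv 0 \pmod p$; then $v_3 + k v_2 \equiv 0 \pmod p$ can hold only if its $i$-th coordinate vanishes, which pins $k$ down to the single residue class $k \equiv -(v_3)_i (v_2)_i^{-1} \pmod p$. Hence there is at most one forbidden residue of $k$ modulo $p$.

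Finally I would combine these local constraints. Let $p_1, \dots, p_r$ be the primes dividing $a$; for each I have singled out at most one forbidden residue class modulo $p_j$, and since $p_j \geq 2$ there remains an admissible residue. By the Chinese Remainder Theorem there is an integer $k$ lying in an admissible class modulo every $p_j$ simultaneously, and for this $k$ no prime divides $\gcd(v_1, v_3 + k v_2)$, so $\gcd(v_1, v_3 + k v_2) = 1$ as required. The point needing care—and the crux of the argument—is the local step: it is exactly the coprimality $\gcd(v_1,v_2,v_3)=1$, i.e. that $v_2$ and $v_3$ are not both $\equiv 0 \pmod p$ for $p \mid v_1$, that guarantees an admissible residue modulo each prime dividing $a$; everything else is bookkeeping.
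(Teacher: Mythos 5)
Your proof is correct and follows essentially the same route as the paper: reduce to the case $\gcd(v_1,v_2,v_3)=1$ and then control, one prime $p\mid\gcd(v_1)$ at a time, the condition $v_3+kv_2\not\equiv 0 \pmod p$, using coprimality to guarantee an admissible choice at each prime. The only real difference is how $k$ is assembled --- you pick an admissible residue modulo each prime and invoke the Chinese Remainder Theorem, while the paper writes $k$ down explicitly as the product of those primes $p\mid\gcd(v_1)$ dividing neither $\gcd(v_2)$ nor $\gcd(v_3)$ --- and both mechanisms work for the same underlying reason.
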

\begin{proof}
If $\gcd(v_1,v_2,v_3)=1$,  denote by $\mathcal{P}$ the set $\{p \text{ is prime}, ~p\mid \gcd(v_1)\}$ which  can be divided into three disjoint subsets as follows
    \begin{align*}
        &\mathcal{P}_1=\{p \in \mathcal{P},~p\mid \gcd(v_3)\},\\
        &\mathcal{P}_2=\{p \in \mathcal{P},~p\nmid \gcd(v_3), p\mid \gcd(v_2) \},\\
        &\mathcal{P}_3=\mathcal{P}-\mathcal{P}_1\cup\mathcal{P}_2.
    \end{align*}
Note that $v_1$ is non-zero, $\mathcal{P}_3\subset \mathcal{P}$ is finite. Let $$k=\begin{cases}
             1, & \mbox{if } \mathcal{P}_3=\varnothing \\
             \prod_{p\in \mathcal{P}_3}p, & \mbox{otherwise}.\end{cases}$$

   If $p$ is a prime number dividing $\gcd(v_1,v_3+kv_2)$, then $p\in \mathcal{P}=\mathcal{P}_1\cup\mathcal{P}_2\cup\mathcal{P}_3$. Moreover, if $p\in \mathcal{P}_2\cup\mathcal{P}_3$, then $p$ divides $\gcd(kv_2)$, so $p$ divides $\gcd(v_3)$, but $p\notin\mathcal{P}_1$, we get a contradiction.  Hence $p\in\mathcal{P}_1$, $p$ divides $\gcd(v_3)$, then $p$ divides $\gcd(kv_2)$. Note that $p$ does not divide $k$, so $p$ divides $\gcd(v_2)$, we have $p \mid \gcd(v_1,v_2,v_3)=1$, a contradiction. Thus $\gcd(v_1,v_3+kv_2)=1=\gcd(v_1,v_2,v_3)$.

    If $\gcd(v_1,v_2,v_3)=d\neq 1$ then $\gcd(v_1/d,v_2/d,v_3/d)=1$, the above argument shows that there exists $k\in \mathbb{Z}$ such that $\gcd(v_1/d,v_3/d+kv_2/d)=\gcd(v_1/d,v_2/d,v_3/d)$, hence $\gcd(v_1,v_3+kv_2)=d\gcd(v_1/d,v_3/d+kv_2/d)=d=\gcd(v_1,v_2,v_3)$.
\end{proof}

\begin{prop}\label{lem:hn}
Suppose $A$ is an $n\times n$ integer matrix of type $\mathcal{H}_0$ with $\gcd(A)=1$, then $A$ is integrally conjugate to a matrix of type $\mathcal{H}_n$.
\end{prop}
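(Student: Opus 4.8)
The plan is to turn the statement into a question about greatest common divisors of integer combinations of the columns of $A$, and then feed that question to Lemma \ref{lem:v}. Write $c_j$ for the $j$-th column of $A$, so that the type $\mathcal{H}_0$ hypothesis says $c_1=a_{21}e_2$ (with $e_1,\dots,e_n$ the standard basis), and $\gcd(A)=\gcd(a_{21},c_2,\dots,c_n)=1$. Since a type $\mathcal{H}_n$ matrix is precisely a type $\mathcal{H}_0$ matrix whose $(2,1)$-entry $b_{21}$ and last column $w$ satisfy $\gcd(b_{21},w)=1$, it is enough to conjugate $A$ into a type $\mathcal{H}_0$ matrix with the same $(2,1)$-entry and a last column of the required gcd.

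First I would isolate a family of conjugations acting only on the last column. For integers $t_2,\dots,t_{n-1}$ let $X^{-1}=I+\sum_{j=2}^{n-1}t_j e_{jn}$, a unipotent matrix in $GL_n(\Z)$ whose inverse is $X=I-\sum_{j=2}^{n-1}t_j e_{jn}$, where $e_{jn}$ is the matrix unit. Because $e_{jn}e_1=e_{jn}e_2=0$ for $2\le j\le n-1$ (here $n\ge 3$ is used), one checks $(XAX^{-1})e_1=Ae_1=a_{21}e_2$, so $B=XAX^{-1}$ is again of type $\mathcal{H}_0$ with $b_{21}=a_{21}$. Its last column is $Be_n=XA\bigl(e_n+\sum_{j=2}^{n-1}t_je_j\bigr)=X\bigl(c_n+\sum_{j=2}^{n-1}t_jc_j\bigr)$, and since $X\in GL_n(\Z)$, Lemma \ref{lem:g} shows $Be_n$ and $c_n+\sum_{j=2}^{n-1}t_jc_j$ have the same gcd of entries. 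Adjoining the common $(2,1)$-entry gives
\[\gcd\bigl(b_{21},\,Be_n\bigr)=\gcd\bigl(a_{21},\,c_n+\textstyle\sum_{j=2}^{n-1}t_jc_j\bigr).\]
Thus the proposition reduces to choosing $t_2,\dots,t_{n-1}\in\Z$ so that the right-hand side equals $1$.

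When $a_{21}\neq 0$ this is a clean iteration of Lemma \ref{lem:v}. I would embed everything in $\Z^{n+1}$ via $\tilde a=(a_{21},0,\dots,0)$ and $\tilde c_j=(0,c_j)$, so that gcd's of the augmented vectors equal gcd's of $a_{21}$ with the corresponding columns. Starting from the running vector $r=\tilde c_n$, I fold in $\tilde c_{n-1},\dots,\tilde c_2$ one at a time: at each stage Lemma \ref{lem:v}, applied with the fixed nonzero $v_1=\tilde a$, $v_2$ the next column, and $v_3=r$, yields $t_j$ with $\gcd(\tilde a,\,r+t_j\tilde c_j)=\gcd(\tilde a,\tilde c_j,r)$, and $r+t_j\tilde c_j$ again has the form $(0,\,c_n+\sum t\,c)$. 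Composing these equalities telescopes to $\gcd(\tilde a,\,\tilde c_n+\sum_j t_j\tilde c_j)=\gcd(\tilde a,\tilde c_2,\dots,\tilde c_n)=\gcd(A)=1$, which is exactly the identity needed.

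The case $a_{21}=0$ is the genuine obstacle and must be handled separately. Here the first column of $A$ vanishes, so the target $\gcd(0,w)=\gcd(w)$ is a single-vector condition that the two-term output of Lemma \ref{lem:v} cannot reach directly. On the other hand, vanishing of the first column gives extra freedom: any $X$ with $Xe_1=e_1$ keeps $Be_1=0$ and hence preserves type $\mathcal{H}_0$, and the last column of such a $B$ can be arranged to be $\sum_{j=2}^{n}s_jc_j$ for an arbitrary primitive vector $(s_2,\dots,s_n)$. Since $\gcd(c_2,\dots,c_n)=\gcd(A)=1$, a Smith normal form argument (equivalently, a prime-avoidance argument of the same flavour as Lemma \ref{lem:v}, the finiteness coming from the second invariant factor) produces such a primitive combination that is itself primitive, completing this case. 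The two points requiring care are therefore the verification that the unipotent conjugation really preserves type $\mathcal{H}_0$ despite conjugation mixing rows and columns — which the annihilations $e_{jn}e_1=e_{jn}e_2=0$ settle cleanly — and the degenerate case $a_{21}=0$, where one must step outside the plain iteration of Lemma \ref{lem:v}.
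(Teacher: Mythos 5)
Your proof is correct and follows essentially the same route as the paper's: the same case split on whether the first column vanishes, with a Smith normal form argument handling the degenerate case and an iterated application of Lemma \ref{lem:v}, realized by a unipotent conjugation fixing the first column, handling the generic one. The differences are cosmetic — you modify only the last column via elementary unipotents and carry $a_{21}$ along by embedding into $\Z^{n+1}$, whereas the paper uses a single upper-triangular conjugator and applies Lemma \ref{lem:v} with $v_1=a_{21}e_2$ directly.
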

\begin{proof}
    Denote by $v_j$ the $j$-th column vector of matrix $A=(a_{ij})$. There are two cases.

Case 1: $v_1$ is zero, that is to say $\gcd(v_2,\cdots,v_n)=1$. Hence there exist $B\in GL_n(\mathbb{Z})$ and $C\in GL_{n-1}(\mathbb{Z})$ such that $B[v_2,\cdots,v_n]C$ is the Smith normal form (one can see \cite{Ne} for more details) of $[v_2,\cdots,v_n]$ whose shape is
$$\begin{bmatrix}
        1 & * &\cdots & * \\
        0& *&\cdots & *\\
        \vdots& \vdots & \ddots& \vdots\\
        0& *&\cdots & *\\
    \end{bmatrix}_{n\times (n-1)}.$$
Let $D$ be the permutation matrix $$\begin{bmatrix}
        0 &\cdots  & 0 &1 \\
        1& \cdots &0& 0\\
        \vdots& \ddots & \vdots& \vdots\\
        0&\cdots & 1 & 0\\
    \end{bmatrix}_{(n-1)\times (n-1)},$$
then $B[v_2,\cdots,v_n]CD=[\ast ,\cdots,\ast ,e_1]$ where $e_1=[1,0,\cdots,0]^T$.

Let $T=\begin{bmatrix}
    1 & 0\\
    0 & CD
\end{bmatrix}$, then
\begin{align*}
    T^{-1}AT & = T^{-1}[v_1,v_2,\cdots,v_n]T\\
    &=T^{-1}[v_1,[v_2,\cdots,v_n]CD]\\
    &=T^{-1}[v_1,B^{-1}[\ast ,\cdots,\ast ,e_1]] \\ &=[v_1,\ast ,\cdots,\ast ,T^{-1}B^{-1}e_1].
\end{align*}
By Lemma \ref{lem:g}, we have
$\gcd(T^{-1}B^{-1}e_1)=\gcd(e_1)=1$, hence $\gcd(v_1,T^{-1}B^{-1}e_1)=1$ and $T^{-1}AT$ is a matrix of type $\mathcal{H}_n$.

Case 2: $v_1$ is non-zero.

We construct $n-1$ vectors $\tilde{v}_2,\cdots,\tilde{v}_n$ inductively such that $\gcd(v_1,\cdots, v_j)=\gcd(v_1,\tilde{v}_j)$ for $j=2,\cdots,n$ as follows.

    Let $\tilde{v}_2=v_2$, then $\gcd(v_1, v_2)=\gcd(v_1,\tilde{v}_2)$.  Suppose $\gcd(v_1,\cdots, v_j)=\gcd(v_1,\tilde{v}_j)$, by Lemma \ref{lem:v}, there exists $k_{j+1}\in\mathbb{Z}$, such that $$\gcd(v_1,\tilde{v}_j,v_{j+1})=\gcd(v_1,v_{j+1}+k_{j+1}\tilde{v}_{j}),$$
     let $\tilde{v}_{j+1}=v_{j+1}+k_{j+1}\tilde{v}_{j}$, then $\gcd(v_1,\cdots, v_{j+1})=\gcd(v_1,\tilde{v}_{j+1})$.

    Let $ T=(t_{ij})\in GL_n(\Z) $, where $t_{ii}=1$ for  $i=1,\cdots,n$ and  $t_{ij}=\prod_{l=i+1}^{j}k_l$ for $2\leq i<j\leq n$, other $t_{ij}$s are $0$. That is

    $$T=\begin{bmatrix}
  1 & 0 &  0 & 0 & \cdots  & 0 \\
  0 & 1 & k_3 & k_3k_4 & \cdots  & \prod_{l=3}^{n}k_l \\
 0 & 0 & 1 & k_4 & \cdots  & \prod_{l=4}^{n}k_l  \\
  \vdots & \vdots & \vdots & \ddots & \ddots &  \vdots \\
0 & 0 & 0 & \cdots & 1 & k_n \\
  0 & 0 & 0 & \cdots & 0  & 1
\end{bmatrix}.$$
Note that
$\tilde{v}_{j}=v_{j} + k_{j}v_{j-1} + \cdots + (\prod_{l=i+1}^{j}k_l)v_i+\cdots+ (\prod_{l=3}^{j}k_l)v_2.$
One can verify $[v_1,v_2,\cdots,v_n]T=[v_1,\tilde{v}_2,\cdots,\tilde{v}_n]$ and $T^{-1}e_2=e_{2}$, where $e_{2}=[0,1,0,\cdots,0]^{T}$. Since  $A$ is of type $\mathcal{H}_0$, $v_1=a_{21}e_{2}$, then $T^{-1}v_1=v_1$. Hence
    $$ T^{-1}AT= T^{-1}[v_1,v_2,\cdots,v_n]T=T^{-1}[v_1,\tilde{v}_2,\cdots,\tilde{v}_n]=[v_1,T^{-1}\tilde{v}_2,\cdots,T^{-1}\tilde{v}_n], $$
and by Lemma \ref{lem:g}
    $$\gcd(v_1,T^{-1}\tilde{v}_n)=\gcd(T^{-1}v_1,T^{-1}\tilde{v}_n)=\gcd(v_1,\tilde{v}_n)=\gcd(v_1,\cdots,v_n)=\gcd(A)=1,$$
which means $T^{-1}AT$ is a matrix of type $\mathcal{H}_n$.
\end{proof}

\begin{lem}\label{lem:ma}
Let $A=(a_{ij})$ be an $n\times n$ integer matrix of type $\mathcal{H}_n$ satisfying one of the following conditions, then $m_A<n$.

(I) $a_{21}=0$, $a_{2n}=\cdots=a_{n-1,n}=0$, $a_{1n}\equiv \pm 1 \mod a_{nn}$;

(II) $a_{21}=0$, $a_{2n},  \cdots,a_{n-1,n}$ are not all 0;

(III) $a_{21}\neq 0$.
\end{lem}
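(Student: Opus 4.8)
The statement to prove is that in each of the three cases a generating orbit set of size $n-1$ exists, i.e. $m_A\le n-1$; equivalently, writing $e_1,\dots,e_n$ for the standard basis, I must exhibit $n-1$ vectors whose $A$-orbits generate $\Z^n$. The framework I would use is the reduction criterion that a subgroup $L\subseteq\Z^n$ equals $\Z^n$ if and only if its image in $\Z^n/p\Z^n\cong\F_p^{\,n}$ is everything for every prime $p$; applied to $L=OG_A(S)$ this says $OG_A(S)=\Z^n$ precisely when, for each $p$, the reductions $\{\bar A^{\,k}\bar v:v\in S,\ k\ge0\}$ span $\F_p^{\,n}$. The role of the hypotheses is that for a type $\mathcal H_n$ matrix the equality $\gcd(a_{21},a_{1n},\dots,a_{nn})=1$ forbids $A$ from being scalar modulo any $p$ (a scalar reduction would force $p\mid a_{21}$ and $p\mid a_{in}$ for all $i$, whence $p\mid\gcd(a_{21},a_{1n},\dots,a_{nn})=1$). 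Thus modulo every single prime the missing coordinate can in principle be recovered, and the whole difficulty is to produce one global set of $n-1$ seeds that works simultaneously at all primes.

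For the bookkeeping I would first isolate an elementary subadditivity principle: if $V\subseteq\Z^n$ is an $A$-invariant subgroup, then $m_A\le m_{A|_V}+m_{\bar A}$, where $\bar A$ is induced on $\Z^n/V$. Indeed, taking $S_1\subseteq V$ with $OG_{A|_V}(S_1)=V$ and lifting a generating orbit set $\bar S_2$ of $\Z^n/V$ to $S_2\subseteq\Z^n$, the set $S_1\cup S_2$ satisfies $OG_A(S_1\cup S_2)\supseteq V$ and surjects onto $\Z^n/V$, hence equals $\Z^n$. When $V$ is a coordinate summand this is most useful, because then $\Z^n/V$ is free and $m_{\bar A}$ is at most its rank.

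Case (I) is the one I would settle with this principle, and it is where the congruence $a_{1n}\equiv\pm1\pmod{a_{nn}}$ enters. The hypotheses $a_{21}=0$ and $a_{2n}=\cdots=a_{n-1,n}=0$ say exactly that $Ae_1=0$ and $Ae_n=a_{1n}e_1+a_{nn}e_n$, so $V=\langle e_1,e_n\rangle$ is an $A$-invariant direct summand with $A|_V=\left(\begin{smallmatrix}0&a_{1n}\\0&a_{nn}\end{smallmatrix}\right)$. Computing the orbit of a seed $xe_1+ye_n$ shows a single orbit generates $V$ exactly when there are integers $x,y$ with $y(xa_{nn}-a_{1n}y)=\pm1$, that is, exactly when $a_{1n}\equiv\pm1\pmod{a_{nn}}$; hence $m_{A|_V}=1$. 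Since $\Z^n/V\cong\Z^{\,n-2}$ is free, $m_{\bar A}\le n-2$, and subadditivity yields $m_A\le 1+(n-2)=n-1$.

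For cases (II) and (III) I expect no single invariant coordinate block to suffice, so there I would argue by directly constructing $n-1$ seeds --- most of them standard basis vectors --- and verifying the prime criterion. In case (III) the relation $Ae_1=a_{21}e_2$ with $a_{21}\ne0$ makes $e_2$ reachable from $e_1$, while in case (II) the middle support of the last column makes the corresponding coordinate reachable from $e_n$; in both, $\gcd(a_{21},a_{1n},\dots,a_{nn})=1$ is used to turn these reachings into a unit at every bad prime. Concretely I would take the seeds to be $e_1,\dots,e_n$ with one basis vector omitted and one survivor replaced by a B\'ezout combination dictated by $\gcd(a_{21},a_{1n},\dots,a_{nn})=1$ (Lemma \ref{lem:v} being the tool for producing such combinations), so that the orbit of the modified seed supplies the omitted coordinate with coefficient $\pm1$. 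The main obstacle is precisely this last point in case (III): plain single-coordinate omission provably fails in general (for instance for $\left(\begin{smallmatrix}0&0&2\\2&0&0\\0&0&1\end{smallmatrix}\right)$, omitting $e_1,e_2,e_3$ recovers the missing coordinate only up to $\gcd$ equal to $2,2,0$ respectively), so the delicate task is to choose one modified seed whose orbit reaches the omitted coordinate with a unit coefficient modulo every prime dividing $a_{21}$ at once. Making that choice uniform in $n$ and in the entries of $A$, rather than matrix-by-matrix, is the crux, and it is here that the gcd hypothesis and the adjustment afforded by Lemma \ref{lem:v} must be combined with care.
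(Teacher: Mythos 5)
Your argument for case (I) is correct and essentially matches the paper's: the seed $xe_1+ye_n$ with $y=1$ and $xa_{nn}-a_{1n}=\pm1$ is exactly the vector $v=[s_0,0,\dots,0,1]^T$ used there, and your subadditivity principle $m_A\le m_{A|_V}+m_{\bar A}$ for an $A$-invariant direct summand $V$ is a valid (and slightly cleaner) way to package the step the paper does by extending $\{v,Av\}$ to a basis of $\Z^n$. But cases (II) and (III) are the substance of the lemma, and for those you have only stated a plan and then explicitly conceded that its crucial step --- choosing one modified seed whose orbit recovers the omitted coordinate with unit coefficient simultaneously at every bad prime --- is unresolved. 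That is a genuine gap, not a routine verification: your own example $\left(\begin{smallmatrix}0&0&2\\2&0&0\\0&0&1\end{smallmatrix}\right)$ shows the naive version fails, and nothing in the proposal supplies the uniform choice you correctly identify as the crux.

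The paper closes this gap with a construction you did not find: in all three cases it takes a \emph{single} seed $v=[s,0,\dots,0,t]^T$ supported on coordinates $1$ and $n$, forms the $n\times 2$ matrix $Y=[v,Av]$, writes out all possibly nonzero $2\times 2$ minors ($f_1,f_1',f_2,f_j,f_j'$), and shows one can choose $s,t$ so that the gcd of these minors is $1$; by the Smith normal form this means $\{v,Av\}$ extends to a basis $\{v,Av,u_3,\dots,u_n\}$, whence $\{v,u_3,\dots,u_n\}$ is a generating orbit set of size $n-1$. In particular the other $n-2$ generators are \emph{not} standard basis vectors but whatever completes that basis, which sidesteps the obstruction you ran into. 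The choice of $s,t$ in case (III) is where all the work lies: the paper introduces $c_1=\gcd(a_{1n},a_{2n})$, reduces $\gcd(f_1,f_1',f_2)$ to $\gcd(tc_1+sc_2,\,sc_3)$ via a $GL_2(\Z)$ change of variables and Lemma \ref{lem:g}, and then picks $s$ and $t$ as products over a partition of the primes dividing $\gcd(c_3,c_4)$ (according to whether they divide $c_1$ and $a_{21}$), using the type-$\mathcal{H}_n$ condition $\gcd(a_{21},a_{1n},\dots,a_{nn})=1$ to rule out a common prime divisor. If you want to complete your proof along your own lines you would need an argument of comparable delicacy; as written, the proposal establishes only case (I).
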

\begin{proof}
We show below that there exists $v\in \mathbb{Z}^n$ such that $\{v,Av\}$ can be extended to a basis $\{v,Av, u_3,\cdots, u_n\}$ of $\mathbb{Z}^n$. Hence $\{v,u_3,\cdots, u_n\}$ is a generating orbit set of $A$ and $m_A<n$.

Let $v=[s,0,\cdots,0,t]^T$, since the shape of $A=(a_{ij})$ is
    $$ \begin{bmatrix}
        0& a_{12}& \cdots   & a_{1n} \\
        a_{21}& a_{22}& \cdots  & a_{2n} \\
        0 & a_{32} & \cdots & a_{3n} \\
        \vdots & \vdots & \vdots & \vdots \\
        0 & a_{n2} & \cdots & a_{nn}
    \end{bmatrix},$$
we have
$$Y=[v,Av]=\begin{bmatrix}
        s& 0& 0& \cdots   & 0 & t \\
        a_{1n}t& a_{21}s+ a_{2n}t& a_{3n}t & \cdots    &a_{n-1,n}t &a_{nn}t
    \end{bmatrix}^T.$$

The Smith normal form of $Y$ is
$$\begin{bmatrix}
        d_1(Y)& 0& 0&\cdots    & 0 \\
        0& d_2(Y)& 0&\cdots    & 0
    \end{bmatrix}^T$$
where $d_1(Y)=\gcd(Y)$ and  $d_2(Y)$ is the greatest common divisor of all $2\times 2$ minors of $Y$.
Note that $d_1(Y)$ divides $d_2(Y)$, we will choose suitable $s,t$ such that $d_2(Y)=1$, then the two columns $v,Av$ of $Y$ can be extended to a basis of $\Z^n$.

The $2\times 2$ minors of $Y$ that might be non-zero are as follows

    $$f_1(s,t)=a_{21}s^2+a_{2n}st=\begin{vmatrix}
        s & 0\\
        a_{1n}t & a_{21}s+ a_{2n}t
    \end{vmatrix},
    $$

  $$
    f'_1(s,t)=-(a_{21}st+a_{2n}t^2)=\begin{vmatrix}
        0 & t\\
        a_{21}s+ a_{2n}t & a_{nn}t
    \end{vmatrix},
    $$

    $$
    f_2(s,t)=a_{nn}st-a_{1n}t^2=\begin{vmatrix}
        s & t\\
        a_{1n}t & a_{nn}t\\
    \end{vmatrix},$$
and for $n>3$,
     $$
    f_j(s,t)=a_{jn}st=\begin{vmatrix}
        s & 0\\
        a_{1n}t & a_{jn}t
    \end{vmatrix},j=3,\cdots,n-1,
    $$
    $$
    f'_j(s,t)=-a_{jn}t^2=\begin{vmatrix}
        0 & t\\
        a_{jn}t & a_{nn}t
    \end{vmatrix},j=3,\cdots,n-1.
    $$

(I) If $a_{21}=a_{2n}=\cdots=a_{n-1,n}=0$ and $a_{1n}\equiv \pm 1 \mod a_{nn}$, then there exists $s_0\in \Z$ such that $a_{nn}s_0-a_{1n}=\pm 1$. Let $s=s_0,~t=1$, we have $\gcd(f_2(s,t))=1$, hence $d_2(Y)=1$.

(II) If $a_{21}=0$ and $a_{2n}, \cdots,a_{n-1,n}$ are not all 0, then $d=\gcd(a_{2n}, \cdots,a_{n-1,n})\neq 0$. By Lemma \ref{lem:v}, there exists $k\in \Z$ such that $$\gcd(d,k a_{nn}-a_{1n})=\gcd(d,a_{nn},-a_{1n})=\gcd(a_{21},a_{1n},\cdots,a_{nn})=1.$$ because $A$ is of type $\mathcal{H}_n$. Let $s=k,~t=1$, then for $n=3$
$$\gcd(f'_1(s,t),f_2(s,t))=\gcd(d,k a_{nn}-a_{1n})=1,$$
and for $n>3$
$$\gcd(f'_1(s,t),f_2(s,t),f'_3(s,t),\cdots, f'_{n-1}(s,t))=\gcd(d,k a_{nn}-a_{1n})=1.$$
We also have $d_2(Y)=1$.

(III) $a_{21}\neq 0$.
If $a_{1n}=a_{2n}=0$, let $s=t=1$, then $$\gcd(f_1(s,t),\cdots,f_{n-1}(s,t))=\gcd(a_{21},a_{1n},\cdots,a_{nn})=1$$ because $A$ is of type $\mathcal{H}_n$, and we have $d_2(Y)=1$.

Now $c_1=\gcd(a_{1n},a_{2n})\neq 0$, there exist $k,\ell\in\mathbb{Z}$ such that $ka_{1n}+\ell a_{2n}=c_1$. The matrix $T=\begin{bmatrix}
        \ell  & -k \\
        a_{1n}/c_1 & a_{2n}/c_1\\
    \end{bmatrix}$ is in $GL_2(\mathbb{Z})$ and $$T\begin{bmatrix}
        a_{21} & a_{2n}\\
        a_{nn} & -a_{1n}
    \end{bmatrix}=\begin{bmatrix}
        c_2 & c_1\\
        c_3 & 0\\
    \end{bmatrix}$$
where $c_2=\ell a_{21}-k a_{nn},~ c_3=(a_{1n}a_{21}+a_{2n}a_{nn})/c_1$. By Lemma \ref{lem:g},
    \begin{equation}\label{c123}
        \gcd(c_1,c_2,c_3)=\gcd(\begin{bmatrix}
        c_2 & c_1\\
        c_3 & 0\\
    \end{bmatrix})=\gcd(\begin{bmatrix}
        a_{21} & a_{2n}\\
        a_{nn} & -a_{1n}
    \end{bmatrix})=\gcd(a_{21},a_{1n},a_{2n},a_{nn}).
    \end{equation}

Note that $$\begin{bmatrix}
        a_{21} & a_{2n}\\
        a_{nn} & -a_{1n}
    \end{bmatrix}\begin{bmatrix}
        s \\
        t
    \end{bmatrix}=\begin{bmatrix}
        \ell  & -k\\
        a_{1n}/c_1 & a_{2n}/c_1\\
    \end{bmatrix}^{-1}\begin{bmatrix}
        c_2 & c_1\\
        c_3 & 0\\
    \end{bmatrix}\begin{bmatrix}
        s \\
        t
    \end{bmatrix},$$
    i.e.
    $$\begin{bmatrix}
        a_{21}s+ a_{2n}t\\
        a_{nn}s  -a_{1n}t
    \end{bmatrix}=\begin{bmatrix}
        \ell  & -k\\
        a_{1n}/c_1 & a_{2n}/c_1\\
    \end{bmatrix}^{-1}\begin{bmatrix}
         t c_1+s c_2\\
        sc_3
    \end{bmatrix}
    ,$$
by Lemma \ref{lem:g},
           \begin{equation}\label{eq0}
             \gcd(a_{21}s+ a_{2n}t,a_{nn}s-a_{1n}t) = \gcd( tc_1+sc_2,sc_3).
           \end{equation}

Moreover, for any two coprime integers $s,t$ such that $\gcd(a_{21},t)=1$,  we have $\gcd(a_{21}s+a_{2n}t, t)=1$ and so
           \begin{equation}\label{eq1}
             \gcd(a_{21}s+a_{2n}t, t(a_{nn}s  -a_{1n}t)) = \gcd(a_{21}s+a_{2n}t, a_{nn}s-a_{1n}t).
           \end{equation}
Since $$\gcd(f_1(s,t),f'_1(s,t))=(a_{21}s+ a_{2n}t)\gcd(s,-t)=a_{21}s+ a_{2n}t,$$
by the equalities (\ref{eq1}) and (\ref{eq0}), we have
\begin{align}\label{eq2}
  \gcd(f_1(s,t),f'_1(s,t),f_2(s,t)) &= \gcd(a_{21}s+ a_{2n}t, t(a_{nn}s-a_{1n}t)) \notag \\
   &= \gcd(a_{21}s+a_{2n}t, a_{nn}s-a_{1n}t) \notag\\
   &= \gcd( tc_1+sc_2,sc_3).
\end{align}


%

     Let $c_4=\begin{cases}
                \gcd(a_{3n},\cdots,a_{n-1,n}), & \mbox{if } n>3 \\
                0, & \mbox{if } n=3
              \end{cases}$ and $\mathcal{P}=\{p \text{ is prime},~p\mid \gcd(c_3,c_4)\}$. There are two cases:


    Case 1:  $\mathcal{P}$ is infinite which means $c_3=c_4=0$, then $\gcd(a_{21},a_{1n},a_{2n},a_{nn})=1$ since $A$ is of type $\mathcal{H}_n$. By the equality (\ref{c123}),  $\gcd(c_1,c_2)=\gcd(c_1,c_2,c_3)=1$ and there exist $s_0,t_0\in \Z$ with $\gcd(s_0,t_0)=1$ such that $t_0 c_1+s_0 c_2=1$.

    Note that $\gcd(t_0,c_2)=1$ and $a_{21}\neq 0$, by Lemma \ref{lem:v}, there exists $k\in \Z$ such that $1=\gcd(a_{21}, c_2,t_0)=\gcd(a_{21}, t_0+kc_2)$. Let $s=s_0-kc_1, t=t_0+kc_2$, then $\gcd(a_{21},t)=1$ and $tc_1+sc_2=1$. We have $\gcd(s,t)=1$. By the equality (\ref{eq2}), $\gcd(f_1(s,t),f'_1(s,t),f_2(s,t))=1$, so $d_2(Y)=1$.

%

   Case 2: $\mathcal{P}$ is finite. $\mathcal{P}$ can be divided into three disjoint subsets as follows
    \begin{align*}
        &\mathcal{P}_1=\{p\in\mathcal{P},~p\nmid c_1\},\\
        &\mathcal{P}_2=\{p\in\mathcal{P},~p\mid c_1,~p\nmid a_{21}\},\\
        &\mathcal{P}_3=\{p\in\mathcal{P},~p\mid c_1,~p\mid a_{21}\}.
    \end{align*}

    Let $s=\begin{cases}
             1, & \mbox{if } \mathcal{P}_1=\varnothing, \\
             \prod_{p\in\mathcal{P}_1}p, & \mbox{otherwise},
           \end{cases}$
    and  $t=\begin{cases}
             1, & \mbox{if } \mathcal{P}_2=\varnothing, \\
             \prod_{p\in\mathcal{P}_2}p, & \mbox{otherwise}
           \end{cases}$, then $\gcd(a_{21},t)=1$ and $\gcd(s,t)=1$.

   Suppose $p$ is a  prime number that divides $\gcd(s c_3,tc_4)$, then $p\in \mathcal{P}$.
\begin{itemize}
  \item If  $p\in \mathcal{P}_1$, then $p\mid s$  and $p\nmid t$, hence $p\nmid (tc_1+sc_2)$.
  \item If $p\in \mathcal{P}_2\cup \mathcal{P}_3$, then $p\nmid s$ and $p\mid \gcd(c_1,c_3,c_4)$. Since $\gcd(c_1,c_2,c_3,c_4)=1$, $p\nmid c_2$, hence $p\nmid tc_1+sc_2$.
\end{itemize}
    The above argument shows that $\gcd(sc_3,tc_4,tc_1+sc_2)=1$.

    If $n=3$, then $c_4=0$ and by the equality (\ref{eq2}), we have
    $$\gcd(f_1(s,t),f'_1(s,t),f_2(s,t))=\gcd(sc_3,tc_1+sc_2)=\gcd(sc_3,tc_4,tc_1+sc_2)=1.$$

    If $n>3$,
note that $$\gcd(f_3(s,t),f'_3(s,t),\cdots,f_{n-1}(s,t),f'_{n-1}(s,t))=tc_4,$$ by the equality (\ref{eq2}) we have
$$\gcd(f_1(s,t),f'_1(s,t),f_2(s,t),f_3(s,t),f'_3(s,t),\cdots,f_{n-1}(s,t),f'_{n-1}(s,t))=1.$$
Hence $d_2(Y)=1$.
%
\end{proof}

\begin{cor}\label{cor:hn}
Suppose $H=(h_{ij})$ is an element in $GL_n(\Z)$ such that $H$ is of type $\mathcal{H}$  with $\gcd(H-h_{11}I)=1$, then $m_H<n$.
\end{cor}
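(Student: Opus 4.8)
The plan is to reduce the statement to Lemma \ref{lem:ma} by a short chain of reductions and then to dispatch its three cases, with only one of them requiring real work. Set $\lambda = h_{11}$ and pass to $A = H - \lambda I$. By Lemma \ref{cor:l} we have $m_H = m_A$, so it suffices to bound $m_A$. The matrix $A$ is of type $\mathcal{H}_0$: subtracting $\lambda I$ turns the $(1,1)$ entry into $0$ and leaves the first column otherwise unchanged, so its rows $3,\dots,n$ in the first column are still zero, while $\gcd(A) = \gcd(H - h_{11}I) = 1$ by hypothesis. Hence Proposition \ref{lem:hn} applies and produces $T \in GL_n(\Z)$ with $A' := T^{-1}AT$ of type $\mathcal{H}_n$, and Lemma \ref{lem:co} gives $m_A = m_{A'}$. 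Everything therefore reduces to proving $m_{A'} < n$, for which by Lemma \ref{lem:ma} it is enough to check that $A' = (a_{ij})$ satisfies one of the conditions (I), (II), (III).

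If $a_{21} \neq 0$ we are in case (III), and if $a_{21} = 0$ while $a_{2n},\dots,a_{n-1,n}$ are not all zero we are in case (II); in both situations Lemma \ref{lem:ma} yields $m_{A'} < n$ at once. The only remaining situation is $a_{21} = 0$ together with $a_{2n} = \cdots = a_{n-1,n} = 0$, and here I must verify the congruence $a_{1n} \equiv \pm 1 \mod a_{nn}$ required by (I). This is the crux of the proof, and it is where the invertibility of $H$ must be used.

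To establish it, I would exploit that $B := T^{-1}HT = A' + \lambda I$ lies in $GL_n(\Z)$, whence $\det B = \det H = \pm 1$. Because $a_{21} = 0$ and $A'$ is of type $\mathcal{H}_0$, the whole first column of $A'$ vanishes, so the first column of $B$ is $[\lambda,0,\dots,0]^T$; and because $a_{2n} = \cdots = a_{n-1,n} = 0$, the last column of $B$ is $[\,a_{1n},0,\dots,0,a_{nn}+\lambda\,]^T$. Expanding $\det B$ along its first column and then expanding the resulting $(n-1)\times(n-1)$ minor along its last column gives the factorization $\det B = \lambda\,(a_{nn}+\lambda)\,N$, where $N$ is the determinant of the central $(n-2)\times(n-2)$ block (and $n\geq 3$ guarantees this block makes sense). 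Since this integer product equals $\pm 1$, each factor is $\pm 1$; in particular $|\lambda| = 1$ and $|a_{nn}+\lambda| = 1$, which forces $a_{nn} = (a_{nn}+\lambda) - \lambda \in \{0,\pm 2\}$. Finally the type $\mathcal{H}_n$ condition reads $\gcd(a_{21},a_{1n},\dots,a_{nn}) = \gcd(a_{1n},a_{nn}) = 1$: if $a_{nn} = 0$ then $a_{1n} = \pm 1$, and if $a_{nn} = \pm 2$ then $a_{1n}$ is odd, so in either case $a_{1n} \equiv \pm 1 \mod a_{nn}$. Thus (I) holds, Lemma \ref{lem:ma} applies, and $m_H = m_A = m_{A'} < n$.

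The main obstacle is precisely this last case. Conditions (II) and (III) are visible by inspection of $A'$, but condition (I) is not a formal consequence of $A'$ being of type $\mathcal{H}_n$; establishing it requires feeding the invertibility $\det H = \pm 1$ back through the determinant factorization $\lambda(a_{nn}+\lambda)N = \pm 1$ to pin down the possible values of $a_{nn}$, after which the $\mathcal{H}_n$ coprimality condition forces the desired congruence.
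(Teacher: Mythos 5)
Your proposal is correct and follows essentially the same route as the paper: reduce via Lemma \ref{cor:l}, Proposition \ref{lem:hn} and Lemma \ref{lem:co} to a type $\mathcal{H}_n$ matrix, dispatch cases (II) and (III) by inspection, and in the remaining case use $\det(T^{-1}HT)=\pm1$ and the block/cofactor factorization $\lambda(a_{nn}+\lambda)N=\pm1$ to force $a_{nn}\in\{0,\pm2\}$, after which the $\mathcal{H}_n$ coprimality gives condition (I). This is exactly the paper's argument, merely phrased via cofactor expansion instead of the explicit block form of $H'$.
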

\begin{proof}
  $H-h_{11}I$ is of type $\mathcal{H}_0$ with $\gcd(H-h_{11}I)=1$, by Proposition \ref{lem:hn}, there exists $T\in GL_n(\Z)$, such that $$T^{-1}(H-h_{11}I)T=T^{-1}HT-h_{11}I$$ is a matrix of type $\mathcal{H}_n$. Let $H'=T^{-1}HT=(h'_{ij})$ and $H''=H'-h_{11}I=(h''_{ij})$, by Lemma \ref{cor:l} and Lemma \ref{lem:co}, we have $m_{H}=m_{H'}=m_{H''}$.

  If $h''_{21}, h''_{2n}, \cdots, h''_{n-1,n}$ are not all $0$, then $H''$ satisfies the condition (II) or (III) of Lemma \ref{lem:ma}, hence $m_{H}=m_{H''}<n$.

  Otherwise, $h''_{21}=h''_{2n}=\cdots=h''_{n-1,n}=0$, we have $\gcd(h''_{1n},h''_{nn})=1$ because $H''$ is of type $\mathcal{H}_n$. Moreover, $h'_{21}=h'_{2n}=\cdots=h'_{n-1,n}=0$ and the shape of $H'$ is
    $$ \begin{bmatrix}
        h'_{11} & \ast & h'_{1n} \\
        0 & H^{\ast} & 0 \\
        0 & \ast & h'_{nn}
      \end{bmatrix}.$$
  Since $\pm 1=\det(H)=\det(T^{-1}HT)=\det(H')=h'_{11}h'_{nn}\det(H^{\ast}),$
   we have $h'_{11}=\pm 1$ and $h'_{nn}=\pm 1$.

   Note that $h''_{11}=h'_{11}-h_{11}=0$, then $h''_{nn}=h'_{nn}-h_{11}=h'_{nn}-h'_{11}=0$ or $\pm 2$. If $h''_{nn}=0$, then $|h''_{1n}|=\gcd(h''_{1n},h''_{nn})=1$,  so $h''_{1n}\equiv \pm 1 \mod h''_{nn}$. If $h''_{nn}=\pm 2$, then $h''_{1n}$ is odd because $\gcd(h''_{1n},h''_{nn})=1$, we also have $h''_{1n}\equiv \pm 1 \mod h''_{nn}$. Thus $H''$ satisfies the condition (I) of Lemma \ref{lem:ma} and $m_{H}=m_{H''}<n$.
\end{proof}

\begin{thm}\label{mthm2}
Suppose $A=(a_{ij})$ is an element in $GL_n(\Z)$ with $n\geq 3$, then $m_A=n$ if and only if $\gcd(A-a_{11}I)\neq 1$.
\end{thm}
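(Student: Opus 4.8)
The plan is to prove the two implications separately, after first recording the trivial bound $m_A \le n$: the standard basis $\{e_1,\dots,e_n\}$ is a generating orbit set of cardinality $n$, so $m_A \le n$, and the assertion $m_A = n$ is equivalent to $m_A \not< n$. It therefore suffices to show that $\gcd(A - a_{11}I) \neq 1$ forces $m_A = n$, and conversely that $\gcd(A - a_{11}I) = 1$ forces $m_A < n$.

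For the first direction I would push the hypothesis through the shift invariance of $m$. By Lemma \ref{cor:l} we have $m_A = m_{A - a_{11}I}$. Since $\gcd(A - a_{11}I) \neq 1$ by assumption, applying Lemma \ref{lem:n} to the integer matrix $A - a_{11}I$ gives $m_{A - a_{11}I} = n$, and hence $m_A = n$. This direction uses nothing beyond these two cited lemmas.

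For the converse I would reduce to the type $\mathcal{H}$ normal form and then invoke Corollary \ref{cor:hn}. Proposition \ref{prop:h} produces $X \in GL_n(\Z)$ such that $B := XAX^{-1}$ is of type $\mathcal{H}$ with $b_{11} = a_{11}$, and Lemma \ref{lem:co} gives $m_A = m_B$. The one point to check is that the gcd hypothesis transports to $B$: since $B - a_{11}I = X(A - a_{11}I)X^{-1}$, Lemma \ref{lem:g} yields $\gcd(B - b_{11}I) = \gcd(A - a_{11}I) = 1$. Thus $B$ is an element of $GL_n(\Z)$ of type $\mathcal{H}$ with $\gcd(B - b_{11}I) = 1$, so Corollary \ref{cor:hn} gives $m_B < n$, whence $m_A < n$.

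The synthesis itself is short, because essentially all of the difficulty has been front-loaded into Corollary \ref{cor:hn} and the case analysis of Lemma \ref{lem:ma}. The only step needing genuine care---though it is routine once Lemma \ref{lem:g} is available---is verifying that conjugation by $X$ simultaneously preserves the diagonal entry $a_{11}$ (guaranteed by Proposition \ref{prop:h}) and the invariant $\gcd(A - a_{11}I)$, so that the hypotheses of Corollary \ref{cor:hn} are met precisely. With that observation in place no further computation is required.
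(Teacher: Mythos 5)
Your proposal is correct and follows essentially the same route as the paper: the forward direction via Lemma \ref{cor:l} and Lemma \ref{lem:n}, and the converse via Proposition \ref{prop:h}, Lemma \ref{lem:g} to transport the gcd condition, Lemma \ref{lem:co}, and Corollary \ref{cor:hn}. The only cosmetic difference is your explicit remark that $m_A\le n$ always holds, which the paper leaves implicit.
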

%
\begin{proof}
    If $\gcd(A-a_{11}I)=1$, by Proposition \ref{prop:h}, there exists $T\in GL_n(\Z)$ such that $H=T^{-1}AT=(h_{ij})$ is of type $\mathcal{H}$ with $h_{11}=a_{11}$. By Lemma \ref{lem:g}, $\gcd(H-h_{11}I)=\gcd(T^{-1}AT-a_{11}I)=\gcd(T^{-1}(A-a_{11}I)T)=\gcd(A-a_{11}I)=1$, thus by Lemma \ref{lem:co} and Corollary \ref{cor:hn}, $m_A=m_H<n$.

 If $\gcd(A-a_{11}I)\neq 1$, then by Lemma \ref{cor:l} and \ref{lem:n}, $m_A=m_{A-a_{11}I}=n$.
\end{proof}

\noindent\textbf{Acknowledgements.} The authors are partially supported by National Natural Science Foundation of China (No. 12271385).

\end{document}